\documentclass[a4paper,12pt]{article}
\usepackage{amssymb}
\textwidth=15.5cm 
\topmargin=0cm 
\baselineskip=16pt
\hoffset=-1cm
\usepackage{latexsym}
\usepackage{amsmath,amsthm,amssymb}
\usepackage{amsfonts}
\usepackage{eufrak}

\newtheorem{thm}{Theorem}[section]
\newtheorem{lem}[thm]{Lemma}
\newtheorem{pro}[thm]{Proposition}

\theoremstyle{definition}

\newtheorem{exa}[thm]{Example}
\newtheorem{rem}[thm]{Remark}

\begin{document}

\begin{center}
{\Large Norms of sub-exponential random vectors}
%{\Large On Bernstein-type estimates for chaoses in dependent random vectors}
%{\Large Hanson-Wright inequality for dependent sub-gaussian  random variables}
%Spaces of subgaussian of rank p random variables}
\end{center}
\begin{center}
{\sc Krzysztof Zajkowski}\\
%\footnote{The author is supported by the Polish National Science Center, Grant no. DEC-2011/01/B/ST1/03838.}\\
Institute of Mathematics, University of Bialystok \\ 
Ciolkowskiego 1M, 15-245 Bialystok, Poland \\ 
kryza@math.uwb.edu.pl 
\end{center}

\begin{abstract}
We discuss various forms of the Luxemburg norm in spaces of random vectors with coordinates belonging to the classical  Orlicz spaces of exponential type. We prove equivalent relations between some kinds of these forms. We also show when the so-called uniform norm is majorized by norms of coordinates up to some constants. We give an application of other norm to study of chaos in random vectors with sub-exponential coordinates. 

%that belog to spaces of sub-exponential random vectors.

%We introduce a notion of sub-exponential of rank $p$ random vectors, i.e. a random vectors $\xi=(\xi_1,...,\xi_n)$ is sub-exponential of rank $p$ if there is a positive constant $K$ such that  
%$
%\mathbb{E}\exp(|\xi|_p^p/K^p)\le 2,
%$
%where $|\xi|_p=(\sum_{i=1}^n|\xi_i|^p)^{1/p}$. We prove Bernstein-type estimate for chaoses of order $d$ in sub-exponential of rank $d$ random vectors. As a special case we get an equivalent to the Hanson-Wright inequality for quadratic forms in dependent subgaussian random vectors (sub-exponential of rank 2 r.v.s).
%We give a proof of the Hanson-Wright inequality for dependent subgaussian  random variables
%A new characteristics of the exponential type Orlicz spaces generated by the functions $\exp\{|x|^p\}-1$ ($p\ge 1$) is given. %of random variables  is given.
%For $p>1$ let a function $\varphi_p(x)=x^2/2$ if $|x|\le 1$ and $\varphi_p(x)=1/p|x|^p-1/p+1/2$ if $|x|>1$. %($x\in\mathbb{R}$). 
%For a random variable $\xi$ let $\tau_{\varphi_p}(\xi)$ denote
%$\inf\{c\ge 0:\;\forall_{\lambda\in\mathbb{R}}\; \ln\mathbb{E}\exp(\lambda\xi)\le\varphi_p(c\lambda)\}$; $\tau_{\varphi_p}$ is a norm in a space $Sub_{\varphi_p}(\Omega)=\{\xi:\;\tau_{\varphi_p}(\xi)<\infty\}$ of $\varphi_p$-subgaussian random variables which we call {\it subgaussian of rank $p$ random variables}.
%For $p=2$ we have the classic subgaussian random variables. 
%We use this characteristics to prove a new Bernstein-type inequality for weighted sums of independent random variables.
\end{abstract}

{\it 2010 Mathematics Subject Classification: 46E30,  %spaces of measurable function} 
60E15} %(Inequalities; stochastic orderings, strong law of large numbers} )

{\it Key words: Orlicz spaces of exponential type, Luxemburg norm, random chaoses, Young transform (convex conjugate),  Hanson--Wright inequality}%$\varphi$-subgaussian random variables}  
\section{Introduction}
The most important classes of exponential type Orlicz spaces of random variables are spaces generated by the functions $\psi_p(x)=\exp(|x|^p)-1$ ($p\ge 1$).  For $p=1$, we have the space of sub-exponential random variables and, for $p=2$, the space of sub-gaussian random variables. 

The Luxemburg norm for the function $\psi_p$ is called  $\psi_p$-norm of a random variable $\xi$ %for any $p\ge 1$ 
and has the form
$$
\|\xi\|_{\psi_p}=\inf \Big\{K>0:\; \mathbb{E}\exp(|\xi/K|^p)\le 2\Big\}.
$$
The Orlicz space $L_{\psi_p}=L_{\psi_p}(\Omega,\Sigma,\mathbb{P})$ consists of all random variables $\xi$ on the probability space $(\Omega,\Sigma,\mathbb{P})$ with finite Luxemburg norm, i.e.
$$
L_{\psi_p}:=\{\xi:\;\|\xi\|_{\psi_p}<\infty\}.
$$
Elements of $L_{\psi_p}$ we will call {\it $p$-sub-exponential random variables}. 

Let us note that $L_{\psi_{p_1}}\subset L_{\psi_{p_2}}$, if $p_1\ge p_2$, and moreover $L_\infty\subset L_{\psi_p} \subset L_r$ ($p,r\ge 1$), where $L_\infty,\;L_r$ denote the classical Lebesgue spaces. In other words,  spaces $L_{\psi_p}$ form increasing family, by decreasing $p$, smaller than all of 
$L_p$-spaces but larger than the space of bounded random variables $L_\infty$. %generated by functions $\phi_p(x)=|x|^p$.
%we will call such variables sub-exponential of rank $p$ (or $p$-sub-exponential). 

For $p$-sub-exponential random variables one can formulate the following lemma, whose proof  can be found in \cite[Lem. 2.1]{Zaj1}
\begin{lem}
\label{charlem}
Let $\xi$ be a random variable  and $p\ge 1$. There exist positive constants $K,L,M$ 
such that 
the following conditions are equivalent:\\
 1. %There exists a constant $a>0$ such that 
 $\mathbb{E}\exp(|\xi/K|^p)\le 2$ (i.e. $\xi$ is $p$-sub-exponential r.v. with $\|\xi\|_{\psi_p}\le K$);\\
 2. %There exists a constant $b>0$ such that 
 $\mathbb{P}(|\xi|\ge t) \le 2\exp\big(-(t/L)^p\big)$ for all $t \ge 0$;\\ 
 3. %For every  there exists a constant $c>0$ such that 
 $\mathbb{E}|\xi|^\alpha\le  2M^\alpha\Gamma(\alpha/p+1)$ for all $\alpha\ge 1$.% where $\Gamma$ is the gamma function.
\end{lem}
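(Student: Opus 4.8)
The plan is to establish the three conditions equivalent through the cyclic chain of implications $1\Rightarrow 2\Rightarrow 3\Rightarrow 1$, keeping track at each step of how the new constant is produced from the old one; since $p\ge 1$ is fixed throughout, all the numerical constants below may depend on $p$.

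For $1\Rightarrow 2$ I would simply apply Markov's inequality to the nonnegative random variable $\exp(|\xi/K|^p)$: for $t\ge 0$,
\[
\mathbb{P}(|\xi|\ge t)=\mathbb{P}\big(\exp(|\xi/K|^p)\ge\exp((t/K)^p)\big)\le 2\exp\big(-(t/K)^p\big),
\]
so condition~2 holds with $L=K$. For $2\Rightarrow 3$ I would use the layer-cake formula $\mathbb{E}|\xi|^\alpha=\int_0^\infty\alpha t^{\alpha-1}\mathbb{P}(|\xi|\ge t)\,dt$, bound the tail by $2\exp(-(t/L)^p)$, and evaluate the resulting integral by the substitution $u=(t/L)^p$; this turns it into $(L^\alpha/p)\,\Gamma(\alpha/p)$, and the identity $(\alpha/p)\Gamma(\alpha/p)=\Gamma(\alpha/p+1)$ gives $\mathbb{E}|\xi|^\alpha\le 2L^\alpha\Gamma(\alpha/p+1)$, i.e.\ condition~3 with $M=L$.

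The implication $3\Rightarrow 1$ is where the only real care is needed. Expanding the exponential and integrating term by term (monotone convergence) gives
\[
\mathbb{E}\exp(|\xi/K|^p)=1+\sum_{k=1}^\infty\frac{\mathbb{E}|\xi|^{pk}}{k!\,K^{pk}}\le 1+2\sum_{k=1}^\infty\Big(\frac{M^p}{K^p}\Big)^{k},
\]
where the estimate uses condition~3 with $\alpha=pk\ge 1$ together with $\Gamma(k+1)=k!$. Choosing $K$ so that $(M/K)^p\le 1/3$, e.g.\ $K=3^{1/p}M$, makes the geometric series at most $2\cdot\frac{1/3}{1-1/3}=1$, hence $\mathbb{E}\exp(|\xi/K|^p)\le 2$; this is condition~1, and in particular $\|\xi\|_{\psi_p}\le 3^{1/p}M$, so the cycle closes.

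I expect the main obstacle to be precisely this last step: one must verify that the power series converges once $K$ is large enough and extract an explicit admissible $K$ from the factorial growth of the moments, whereas the first two implications are routine applications of Markov's inequality and a Gamma-integral substitution. A minor point to mention is that the equivalence is understood in the natural sense that each of "condition~1 holds for some $K$", "condition~2 holds for some $L$", "condition~3 holds for some $M$" implies the others with a comparable constant, which is exactly what the chain above produces.
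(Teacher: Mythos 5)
Your proof is correct: the cycle $1\Rightarrow 2$ (Markov applied to $\exp(|\xi/K|^p)$), $2\Rightarrow 3$ (layer-cake plus the substitution $u=(t/L)^p$ and $(\alpha/p)\Gamma(\alpha/p)=\Gamma(\alpha/p+1)$), and $3\Rightarrow 1$ (power-series expansion with $\alpha=pk$, $\Gamma(k+1)=k!$, and $K=3^{1/p}M$ to sum the geometric series) is exactly the standard argument, with all constants tracked correctly. The paper itself does not prove this lemma but defers to \cite[Lem.~2.1]{Zaj1}, where the proof proceeds along essentially the same lines, so your approach matches the intended one.
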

\begin{exa}
Apart from mentioned sub-exponential and sub-gaussian random variables, the Weibull random variables  with scale parameter $\lambda$ and shape parameter $p\ge 1$ form model examples of $p$-sub-exponential r.v.s, since their $\alpha$-moments equal $\lambda^\alpha\Gamma(\alpha/p+1)$. 
\end{exa}
\begin{rem}
Because for the gamma function we have $\Gamma(\alpha/p+1)^{1/\alpha}\sim_p\alpha^{1/p}$, where $\sim_p$ means that there is a constant $c_p$, depending on $p$, such that $c_p^{-1}\alpha^{1/p}\le \Gamma(\alpha/p+1)^{1/\alpha}\le c_p\alpha^{1/p}$, then, by property 3 of Lemma \ref{charlem}, one can introduce an equivalent definition  to the Luxemburg norm of the form $\sup_{\alpha\ge 1}\alpha^{-1/p}(\mathbb{E}|\xi|^\alpha)^{1/\alpha}$.
\end{rem}

Centered sub-gaussian random variables have another important classical characteristic (see Kahane \cite{Kahane}): a random variable $\xi$ 
%with $\mathbb{E}\xi=0$ 
is sub-gaussian if there exists a positive constant $K$ such that $\mathbb{E}\exp(t\xi)\le \exp(K^2t^2/2)$ for all $t\in\mathbb{R}$. In other words when the moment generating function of $\xi$ is majorized by the moment generating function of centered gaussian variable $g$ with the variation $K^2$.
\begin{rem}
It is possible to introduce the other  norm, equivalent to $\|\cdot\|_{\psi_2}$ in the space of centered sub-gaussian random variables, of the form
$$
\tau(\xi)=\inf\{K>0:\;\forall_{t\in\mathbb{R}}\;\mathbb{E}\exp(t\xi)\le \exp(K^2t^2/2)\};
$$
see Buldygin and Kozachenko \cite[Def.1.1.1]{BulKoz}. 
\end{rem}
%The proof of (\ref{rotinv}) is based on the inequality $\tau^2(\sum_{i=1}^nX_i)\le \sum_{i=1}^n\tau^2(X_i)$. 
%Let us emphasize that it holds not only for centered, independent, sub-gaussian random variables but also for...
%One can show that if $c_2$ is a constant such that $c_2^{-1}\|\cdot\|_{\psi_2}\le \tau(\cdot)\le c_2\|\cdot\|_{\psi_2}$ then in the above inequality one can take $C_2=c_2^2$.
 
Let us note that sums of independent, mean zero, sub-gaussian random variables $(\xi_i)_{i=1}^n$ have the very important {\it approximate rotation invariance} property:
$$
\tau^2\Big(\sum_{i=1}^n\xi_i\Big)\le \sum_{i=1}^n\tau^2(\xi_i);
$$
see Buldygin and Kozachenko \cite[Lem.1.1.7]{BulKoz}

In general, it is a problem to indicate some majorant for the moment generating function of a given $p$-sub-exponential random variable, for any $p\ge 1$, because this majorant must be not only $N$-function (see \cite[Def.2.2.2]{BulKoz}), as the function $|x|^p$ ($p>1$), but   must also satisfy so-called the quadratic condition, i.e. be quadratic function at a neighborhood of zero (see \cite[p.67]{BulKoz}). %In Definition \ref{phip} 

We propose some standardization $\varphi_p$ of the function $\phi_p(x):=|x|^p$, for $p>1$, in the form: $\varphi_p(x)=x^2/2$ if $|x|\le 1$ and  $\varphi_p(x)=|x|^p/p-1/p+1/2$ if $|x|>1$. The functions  $\varphi_p$ and $\phi_p$ are equivalent (\cite[Lem.2.5]{Zaj1}) and, for this reason, generate the same Orlicz spaces (see for instance \cite[Th. 2.3.2]{BulKoz}). Moreover, the Young transform of $\varphi_p$ equals $\varphi_q$ where $q=p/(p-1)$, what we write as $\varphi_p^\ast=\varphi_q$ (see \cite[Lem.2.6]{Zaj1}).
Similarly as in the sub-gaussian case, one can define $p$-sub-exponential norm of centered random variables in the form:
$$
\tau_{\varphi_p}(\xi)=\inf\{K>0:\;\forall_{t\in\mathbb{R}}\;\mathbb{E}\exp(t\xi)\le \exp\varphi_q(Kt)\}\quad(q=p/(p-1)).
$$
The general form of the above definition one can find in Buldygin and Kozachenko \cite[Def.2.4.1]{BulKoz}. 
It is known that this norm is equivalent to $\psi_p$-norm restricted to the space of centered random variables belonging to $L_{\psi_p}$ (compare Buldygin and Kozachenko \cite[Th.2.4.2 and Th.2.4.3]{BulKoz} and Giuliano Antonini et al. \cite[Cor.5.1]{Rita}, and  see especially \cite[Th.2.7]{Zaj1}). 

From now on let $c_p$ denote a constant such that
$c_p^{-1}\|\cdot\|_{\psi_p}\le \tau_{\varphi_p}(\cdot)\le c_p\|\cdot\|_{\psi_p}$ ($p>1$). Let us emphasize that, for $p=1$, there is not an equivalent of the norms $\tau_{\varphi_p}$, since the moment generating functions of $1$-sub-exponential (simply sub-exponential) random variables may be defined (take finite values) only on some neighborhoods of zero different for different  variables.

Let us stress that  sums of independent, mean zero, $p$-sub-exponential random variables $(\xi_i)_{i=1}^n$ possess the approximate rotational invariance property only for $p\ge 2$. In general, if $r=\min\{2,q\}$ then we have 
\begin{equation}
\label{rotinv}
\tau_{\varphi_p}^r\Big(\sum_{i=1}^n\xi_i\Big)\le \sum_{i=1}^n\tau_{\varphi_p}^r(\xi_i);
\end{equation}
compare Buldygin and Kozachenko \cite[Th.2.5.2]{BulKoz}.

\section{Norms of sub-exponential random vectors}
Let $\left\langle\cdot,\cdot\right\rangle$ stand for the standard inner product in $\mathbb{R}^n$. For $x=(x_i)_{i=1}^n\in\mathbb{R}^n$ and $p\ge 1$, let $|x|_p$ denote the $p$-th norm of $x$, i.e. $|x|_p=(\sum_{i=1}^n|x_i|^p)^{1/p}$ and $|x|_\infty=\max_{1\le i\le n}|x_i|$. 

We define three types of norms of random vectors $\xi=(\xi_i)_{i=1}^n$. % $L_{\psi_p}^n$
First one is the simple maximum of $\psi_p$-norms of coordinates: $\max_{1\le i\le n}\|\xi_i\|_{\psi_p}$. 
For the following second norm we preserve the notation as in one dimension: 
$$
\|\xi\|_{\psi_p}:=\sup_{|{\bf t}|_q=1}\|\left\langle\xi,\bf{t}\right\rangle\|_{\psi_p}=\inf\Big\{K>0:\;\sup_{|{\bf t}|_q=1}\mathbb{E}\exp\Big(\Big|\frac{\left\langle\xi,\bf{t}\right\rangle}{K}\Big|^p\Big)\le 2\Big\}.
$$
The third norm we define as follows
$$
\|\xi\|_{E_p}:=\||\xi|_p\|_{\psi_p}=\inf\Big\{K>0:\;\mathbb{E}\exp\Big(\Big|\frac{\xi}{K}\Big|_p^p\Big)=\mathbb{E}\exp\Big(\frac{\sum_{i=1}^n|\xi_i|^p}{K^p}\Big)\le 2\Big\}.
$$

We have the following estimates of the Luxemburg norms of  random vector by the Luxemburg norms of its coordinates.
\begin{pro}
Let $\max_{1\le i\le n}\|\xi_i\|_{\psi_p}<\infty$ and $\xi=(\xi_1,...,\xi_n)$.% $\xi=(\xi_i)_{i=1}^n\in {\rm Exp}^n_{(p)}(\Omega)$. 
Then
$$
\max_{1\le i\le n}\|\xi_i\|_{\psi_p}\le\|\xi\|_{\psi_p}\le \|\xi\|_{E_p}\le n^{1/p}\max_{1\le i\le n}\|\xi_i\|_{\psi_p}.
$$
\end{pro}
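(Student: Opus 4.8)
The statement is a chain of three inequalities, and I would prove them separately. Two standard facts will be used repeatedly. First, the Luxemburg norm is monotone: if $|X|\le|Y|$ almost surely then $\|X\|_{\psi_p}\le\|Y\|_{\psi_p}$, because any $K$ with $\mathbb{E}\exp(|Y/K|^p)\le2$ also satisfies $\mathbb{E}\exp(|X/K|^p)\le2$. Second, the infimum defining $\|\cdot\|_{\psi_p}$ is attained when finite (by monotone convergence along a sequence $K_m\downarrow\|\xi_i\|_{\psi_p}$), so that $\mathbb{E}\exp(|\xi_i/K|^p)\le2$ for every $K\ge\|\xi_i\|_{\psi_p}$. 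If $M:=\max_{1\le i\le n}\|\xi_i\|_{\psi_p}=0$ then $\xi=0$ a.s.\ and all four terms vanish, so I may assume $M\in(0,\infty)$; here and below $q$ is the H\"older conjugate exponent, $1/p+1/q=1$, with $q=\infty$ for $p=1$.

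For the leftmost inequality I would test the supremum in the definition of $\|\xi\|_{\psi_p}$ against the standard basis vectors $e_i$: since $|e_i|_q=1$ and $\langle\xi,e_i\rangle=\xi_i$, we get $\|\xi_i\|_{\psi_p}\le\sup_{|{\bf t}|_q=1}\|\langle\xi,{\bf t}\rangle\|_{\psi_p}=\|\xi\|_{\psi_p}$, and maximizing over $i$ gives the bound. For the middle inequality, H\"older's inequality in $\mathbb{R}^n$ gives $|\langle\xi,{\bf t}\rangle|\le|\xi|_p\,|{\bf t}|_q=|\xi|_p$ pointwise whenever $|{\bf t}|_q=1$; monotonicity of the Luxemburg norm then yields $\|\langle\xi,{\bf t}\rangle\|_{\psi_p}\le\||\xi|_p\|_{\psi_p}=\|\xi\|_{E_p}$ for all such ${\bf t}$, and taking the supremum over them proves $\|\xi\|_{\psi_p}\le\|\xi\|_{E_p}$.

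For the rightmost inequality I would verify directly that $K=n^{1/p}M$ meets the condition defining $\|\xi\|_{E_p}$. Writing
$$
\Big|\frac{\xi}{n^{1/p}M}\Big|_p^p=\frac{1}{n}\sum_{i=1}^n\Big|\frac{\xi_i}{M}\Big|^p
$$
and using convexity of $t\mapsto e^t$ (Jensen's inequality for the uniform average over $i$) followed by $M\ge\|\xi_i\|_{\psi_p}$,
$$
\mathbb{E}\exp\Big(\Big|\frac{\xi}{n^{1/p}M}\Big|_p^p\Big)\le\frac{1}{n}\sum_{i=1}^n\mathbb{E}\exp\Big(\Big|\frac{\xi_i}{M}\Big|^p\Big)\le2,
$$
so $\|\xi\|_{E_p}\le n^{1/p}M$. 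Equivalently, one may apply the generalized H\"older inequality with exponents all equal to $n$ to $\mathbb{E}\prod_{i=1}^n\exp(|\xi_i|^p/(nM^p))$.

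I do not expect a genuine obstacle here: the argument uses only H\"older's inequality (pointwise in one place, for the expectation in another), monotonicity of the Luxemburg norm, and convexity of the exponential. The only points needing a word of care are the attainment of the infimum in the Luxemburg norm — needed so that $M\ge\|\xi_i\|_{\psi_p}$ forces $\mathbb{E}\exp(|\xi_i/M|^p)\le2$ — and the trivial degenerate case $M=0$.
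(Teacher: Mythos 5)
Your proof is correct and follows essentially the same route as the paper: testing the supremum with the standard basis vectors for the first inequality and the pointwise H\"older bound $|\langle\xi,{\bf t}\rangle|\le|\xi|_p|{\bf t}|_q$ for the second. For the last inequality the paper uses the multi-factor H\"older inequality with exponents all equal to $n$ (exactly the alternative you mention), while your primary argument replaces it by Jensen's inequality $\exp\bigl(\tfrac1n\sum_i a_i\bigr)\le\tfrac1n\sum_i e^{a_i}$; the two are interchangeable one-line steps, and your extra remarks on attainment of the Luxemburg infimum and the degenerate case $M=0$ are sound.
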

\begin{proof}
Let $K\ge \|\xi\|_{\psi_p}$. Because $|{\bf e}_i|_p=1$ %($e_i=(\delta_{i,j})...$
for all $p\in[1,\infty]$ (${\bf e}_i$ are vectors of the standard basis in $\mathbb{R}^n$) and $\left\langle\xi,{\bf e}_i\right\rangle=\xi_i$ then 
$$
\mathbb{E}\exp\Big(\Big|\frac{\xi_i}{K}\Big|^p\Big)\le \sup_{|{\bf t}|_q=1}\mathbb{E}\exp\Big(\Big|\frac{\left\langle\xi,\bf{t}\right\rangle}{K}\Big|^p\Big)\le 2.
$$
It means that $\|\xi_i\|_{\psi_p}\le\|\xi\|_{\psi_p}$ for all $1\le i \le n$. It implies the first inequality.

Let now $K\ge\|\xi\|_{E_p}$. By the H\"older inequality in $\mathbb{R}^n$ we have $\left\langle\xi,\bf{t}\right\rangle\le|\xi|_p|{\bf t}|_q=|\xi|_p$, if $|{\bf t}|_q=1$.  Thus
$$
\sup_{|{\bf t}|_q=1}\mathbb{E}\exp\Big(\Big|\frac{\left\langle\xi,\bf{t}\right\rangle}{K}\Big|^p\Big)\le \mathbb{E}\exp\Big(\Big|\frac{\xi}{K}\Big|_p^p\Big)\le 2.
$$ 
It gives the second inequality.

Assume now that $K\ge \max_{1\le i\le n}\|\xi_i\|_{\psi_p}$. 
%Observe that 
%$$
%2\ge \mathbb{E}\exp\Big(\frac{\sum_{i=1}^n|\xi_i|^p}{K^p}\Big)\ge\mathbb{E}\exp\Big(\frac{|\xi_i|^p}{K^p}\Big)
%$$
%for every $1\le i\le n$. It means that $\|\xi_i\|_{E(p )}\le \|\xi\|_{E(p )}$ and  the first inequality is satisfied.
%Let now $M$ denote $\max_{1\le i\le n}\|\xi_i\|_{E(p)}$. 
Using the multi-factorial H\"older inequality with exponents $p_i=n$, $1\le i\le n$, we get
$$
\mathbb{E}\exp\Big(\frac{\sum_{i=1}^n|\xi_i|^p}{nK^p}\Big)=\mathbb{E}\Big(\prod_{i=1}^n\exp\Big(\frac{|\xi_i|^p}{nK^p}\Big)\Big)
\le\prod_{i=1}^n\Big(\mathbb{E}\exp\Big(\frac{|\xi_i|^p}{K^p}\Big)\Big)^{1/n}\le 2,
$$
since $K\ge \|\xi_i\|_{\psi_p}$, for $1\le i \le n$, and, in consequence, each factor is less or equal $2^{1/n}$. It implies the last inequality.
\end{proof}
By $L_{\psi_p}^n=L_{\psi_p}^n(\Omega,\Sigma,\mathbb{P})$ we will denote spaces of random vectors with $p$-sub-exponential coordinates. It can be expressed as
$$
L_{\psi_p}^n(\Omega,\Sigma,\mathbb{P})=\{\xi:\Omega\mapsto\mathbb{R}^n:\;\|\xi\|_{\psi_p}<\infty\}.
$$
Let us emphasize that in the above definition of the $L_{\psi_p}^n$-space we can use other equivalent norms but the norm $\|\cdot\|_{\psi_p}$ is the most convenient because for $p\ge 2$ and centered independent coordinates it is majorized by the maximum norms of coordinates up to constant that depends only on $p$ but not depend on dimension $n$. 
\begin{pro}
Let $p>1$ and $r=\min\{q,2\}$ ($q=p/(p-1)$). Assume that a random vector $\xi=(\xi_i)_{i=1}^n$ has independent, mean zero, $p$-sub-exponential coordinates. Then  
$$
\|\xi\|_{\psi_p}\le  n^{1/r-1/q}c_p^2\max_{1\le i\le n}\|\xi_i\|_{\psi_p}.
$$.
\end{pro}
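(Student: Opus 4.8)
The plan is to reduce the statement to the one-dimensional approximate rotational invariance inequality (\ref{rotinv}) by passing through the equivalent norm $\tau_{\varphi_p}$, and then to control the test vector in $\ell_r$ versus $\ell_q$. First I would fix ${\bf t}\in\mathbb{R}^n$ with $|{\bf t}|_q=1$ and note that $\left\langle\xi,{\bf t}\right\rangle=\sum_{i=1}^n t_i\xi_i$ is a sum of independent, mean zero, $p$-sub-exponential random variables, so (\ref{rotinv}) applies with $r=\min\{q,2\}$. Using that $\tau_{\varphi_p}$ is positively homogeneous (which follows from $\varphi_q$ being even, so $\tau_{\varphi_p}(t_i\xi_i)=|t_i|\,\tau_{\varphi_p}(\xi_i)$) and that $\tau_{\varphi_p}(\xi_i)\le c_p\|\xi_i\|_{\psi_p}\le c_p\max_j\|\xi_j\|_{\psi_p}$, I get
$$
\tau_{\varphi_p}^r\Big(\sum_{i=1}^n t_i\xi_i\Big)\le\sum_{i=1}^n|t_i|^r\,\tau_{\varphi_p}^r(\xi_i)\le c_p^r\Big(\sum_{i=1}^n|t_i|^r\Big)\max_{1\le j\le n}\|\xi_j\|_{\psi_p}^r .
$$

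The next step is to bound $\big(\sum_{i=1}^n|t_i|^r\big)^{1/r}=|{\bf t}|_r$ subject to $|{\bf t}|_q=1$. When $q\le 2$ we have $r=q$, hence $|{\bf t}|_r=|{\bf t}|_q=1=n^{1/r-1/q}$; when $q>2$ we have $r=2$, and the power-mean (or H\"older) inequality gives $|{\bf t}|_2\le n^{1/2-1/q}|{\bf t}|_q=n^{1/r-1/q}$. In either case $|{\bf t}|_r\le n^{1/r-1/q}$, so
$$
\tau_{\varphi_p}\Big(\sum_{i=1}^n t_i\xi_i\Big)\le n^{1/r-1/q}c_p\max_{1\le j\le n}\|\xi_j\|_{\psi_p}.
$$
Finally, using the other half of the equivalence, $\|\left\langle\xi,{\bf t}\right\rangle\|_{\psi_p}\le c_p\,\tau_{\varphi_p}(\left\langle\xi,{\bf t}\right\rangle)$, I obtain $\|\left\langle\xi,{\bf t}\right\rangle\|_{\psi_p}\le n^{1/r-1/q}c_p^2\max_j\|\xi_j\|_{\psi_p}$ for every admissible ${\bf t}$, and taking the supremum over $|{\bf t}|_q=1$ in the definition of $\|\xi\|_{\psi_p}$ gives the claim.

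The computation is short; the points that need a little care are the positive homogeneity of $\tau_{\varphi_p}$ and the fact that (\ref{rotinv}) genuinely applies to the summands $t_i\xi_i$, which are independent and mean zero but in general not symmetric, so one relies on the cited Buldygin and Kozachenko estimate directly rather than on a symmetrization step. It is also precisely the absence of any $\tau_{\varphi_1}$-type majorant that forces the restriction $p>1$: for $p=1$ the moment generating functions of the coordinates need not be finite near $0$ in a uniform way, so this route is unavailable.
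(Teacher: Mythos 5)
Your proof is correct and follows essentially the same route as the paper: fix ${\bf t}$ with $|{\bf t}|_q=1$, pass to $\tau_{\varphi_p}$ via the norm equivalence, apply the approximate rotational invariance (\ref{rotinv}) together with homogeneity, and bound $|{\bf t}|_r$ by $n^{1/r-1/q}$ before taking the supremum. The only cosmetic difference is that you split the bound $|{\bf t}|_r\le n^{1/r-1/q}$ into the cases $r=q$ and $r=2$, while the paper simply quotes $\sup_{|{\bf t}|_q=1}|{\bf t}|_r=n^{1/r-1/q}$.
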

\begin{rem}
If $p\ge 2$ then $q\le 2$ and $r=q$. Thus we get the estimate of $\psi_p$-norm of a random vector $\xi$ by maximum $\psi_p$-norms of its coordinates up to the constant that depends only on $p$. If $1< p <2$ then on the right hand side appears the additional factor $n^{1/2-1/q}$, which tends to $\sqrt{n}$ as $p\to 1$.
\end{rem}
\begin{proof}
%If $p\ge 2$ then $q\le 2$ and $\min\{q,2\}=q$. 
Let ${\bf t}\in \mathbb{R}^n$ and $|{\bf t}|_q=1$. By equivalence of $\|\cdot\|_{\psi_p}$ and $\tau_p(\cdot)$ and  by the rotation invariance property (\ref{rotinv}) we get
\begin{eqnarray*}
\|\left\langle\xi,\bf{t}\right\rangle\|_{\psi_p}&\le& c_p\tau_{\varphi_p}\Big(\sum_{i=1}^nt_i\xi_i\Big)
\le c_p\Big(\sum_{i=1}^n\tau_{\varphi_p}^r(t_i\xi_i)\Big)^{1/r}\\ 
\; & \le & c_p\Big(\sum_{i=1}^nt_i^r\Big)^{1/r}\max_{1\le i\le n}\tau_{\varphi_p}(\xi_i)\le 
c_p^2|{\bf t}|_r\max_{1\le i\le n}\|\xi_i\|_{\psi_p}.
\end{eqnarray*}
Taking supremum over $|{\bf t}|_q=1$ we get our estimate on the $\psi_p$-norm of $\xi$:
$$
\|\xi\|_{\psi_p}=\sup_{|{\bf t}|_q=1}\|\left\langle\xi,\bf{t}\right\rangle\|_{\psi_p}\le(\sup_{|{\bf t}|_q=1}|{\bf t}|_r)c_p^2\max_{1\le i\le n}\|\xi_i\|_{\psi_p}
=n^{1/r-1/q}c_p^2\max_{1\le i\le n}\|\xi_i\|_{\psi_p},
$$
since $\sup_{|{\bf t}|_q=1}|{\bf t}|_r=n^{1/r-1/q}$ ($r\le q$).
\end{proof}

The $\psi_2$-norm is  best known, the most important and the most commonly used.  But sometimes it is natural to use other norms and not just for  $p=2$.
In the next section, we show an application of the norms $\|\cdot\|_{E_p}$ to study of chaos in sub-exponential random vectors.

%\begin{exa}
%Let $g=(g_i)_{i=1}^n\in\mathcal{N}(0,I)$; $g_i$'s are independent standard normally distributed and belong to $Exp_{(2)}(\Omega)$. Let $A$ be $n\times n$ matrix %and $\|A\|$ denote its operator norm. We estimate the norm 
%$\|Ag\|_{E(2)}$ but first we calculate the norm of $\|g\|_{E(2)}$. Recall that $|g|^2_2=\sum_{i=1}^ng_i^2$ is $\chi^2_n$-distributed with $n$ degrees of freedom
%and the moment generating function $M_{\chi^2_n}(t)=(1-2t)^{-n/2}$ for $t<1/2$. For these reasons 
%$$
%\mathbb{E}\exp\Big(\frac{|g|^2_2}{K^2}\Big)=\mathbb{E}\exp\Big(\frac{\chi^2_n}{K^2}\Big)=M_{\chi^2_n}\Big(\frac{1}{K^2}\Big)=\Big(1-\frac{2}{K^2}\Big)^{-n/2}.
%$$
%Let us observe that $(1-2/K^2)^{-n/2}\le 2$ if $K\ge 2^{1/2+1/n}/(4^{1/n}-1)^{1/2}$. It implies that
%$$
%\|g\|_{E(2)}=\frac{2^{1/2+1/n}}{(4^{1/n}-1)^{1/2}}.
%$$
%Recall that the covariance matrix $Cov(Ag)=AA^T$ and $\|Cov(Ag)\|=\|AA^T\|=\|A^TA\|$. Moreover $|Ag|_2^2\le \|Cov(Ag)\||g|_2^2$. Hence
%Since $|Ag|_2\le \|A\||g|_2$, we get
%$$
%\mathbb{E}\exp\Big(\frac{|Ag|^2_2}{\|A\|^2\|g\|_{E(2)}^2}\Big)\le \mathbb{E}\exp\Big(\frac{|g|^2_2}{\|g\|_{E(2)}^2}\Big)\le 2.
%$$
%It means that 
%$$
%\|Ag\|_{E(2)}\le \|A\|\|g\|_{E(2)}=\|A\|\frac{2^{1/2+1/n}}{(4^{1/n}-1)^{1/2}}.
%$$
%\end{exa}
\section{Chaos in sub-exponential random vectors}
To prove our main result of this section  we will need the  following technical lemma, whose proof can be found in \cite[Lem. 2.1]{Zaj2}.  
\begin{lem}
\label{lem2}
Let $\eta$ be a centered random variable. If there exist positive constants $a, b$   such that % satisfies the Cram\'er condition $\mathbb{E}\exp(a|\eta|)<\infty$ for some positive $a>0$. Let $b$ be a constant such that  the kumulant generating function of $\eta$ satisfies condition 
$\mathbb{E}\exp(t\eta)\le \exp(a^2t^2/2)$ if $t\in[-b,b]$ then %for some positive constants  $a,b$.
for  every $s\ge 0$ we have
$$
\mathbb{P}\big(|\eta|\ge s\big)\le 2e^{-g(s)},
$$
where %$f(t)=t^2/(2K^2\Vert a \Vert^2)$ if $|t|<K\Vert a \Vert_2^2/\Vert a \Vert_\infty^2$
$$
g(s)=\left\{
\begin{array}{ccl}
\frac{s^2}{2a^2} & {\rm if} & 0\le s \leq a^2b,\\
bs-\frac{a^2b^2}{2} & {\rm if} &   a^2b<s.
\end{array}
\right.
$$
\end{lem}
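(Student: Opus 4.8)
The plan is to run the standard Chernoff (exponential Markov) argument, but with the optimization over the free exponential parameter restricted to the interval on which the moment generating function bound is available.

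First I would bound the upper tail. For $s\ge 0$ and any $t\in[0,b]$, Markov's inequality applied to $e^{t\eta}$ gives
$$\mathbb{P}(\eta\ge s)\le e^{-ts}\,\mathbb{E}e^{t\eta}\le \exp\!\big(-ts+a^2t^2/2\big).$$
The exponent $h(t):=-ts+a^2t^2/2$ is a convex parabola in $t$, minimized at $t^\ast=s/a^2$. If $s\le a^2b$, then $t^\ast\in[0,b]$ is admissible and substituting it yields $h(t^\ast)=-s^2/(2a^2)$, so $\mathbb{P}(\eta\ge s)\le e^{-s^2/(2a^2)}$. If instead $s>a^2b$, then $h$ is strictly decreasing on $[0,s/a^2]\supseteq[0,b]$, so the best admissible choice is the endpoint $t=b$, giving $h(b)=-bs+a^2b^2/2$ and hence $\mathbb{P}(\eta\ge s)\le e^{-(bs-a^2b^2/2)}$. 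In both regimes this is exactly $e^{-g(s)}$.

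Next I would treat the lower tail by symmetry: $-\eta$ is again centered and satisfies $\mathbb{E}e^{t(-\eta)}=\mathbb{E}e^{(-t)\eta}\le e^{a^2t^2/2}$ for $t\in[-b,b]$, so the identical computation gives $\mathbb{P}(-\eta\ge s)\le e^{-g(s)}$. Adding the two one-sided estimates, $\mathbb{P}(|\eta|\ge s)\le \mathbb{P}(\eta\ge s)+\mathbb{P}(-\eta\ge s)\le 2e^{-g(s)}$, which is the assertion.

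The computation itself is routine; the only point requiring care is that the hypothesis controls $\mathbb{E}e^{t\eta}$ only for $|t|\le b$, so the Chernoff optimization cannot be performed freely over all $t\ge 0$. This is precisely what forces the two-regime form of $g$: the sub-gaussian branch $s^2/(2a^2)$ survives only while the unconstrained optimal parameter $s/a^2$ stays in $[0,b]$ (that is, $s\le a^2b$), after which the linear branch $bs-a^2b^2/2$, obtained by freezing $t=b$, takes over. As a sanity check one notes that the two branches agree at $s=a^2b$ (both equal $a^2b^2/2$), so $g$ is continuous, and $g(0)=0$ makes the bound trivially valid at $s=0$.
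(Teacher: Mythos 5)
Your argument is correct and is exactly the standard constrained Chernoff bound that the paper itself defers to (it cites \cite[Lem.~2.1]{Zaj2} rather than reproving it): Markov applied to $e^{t\eta}$ with $t$ restricted to $[0,b]$, optimizing at $t^\ast=s/a^2$ when $s\le a^2 b$ and freezing $t=b$ otherwise, plus the symmetric bound for $-\eta$ and a union bound. Nothing is missing; the observation that the two branches of $g$ match at $s=a^2b$ is a nice sanity check but not needed for the proof.
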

%\begin{rem}
%Let $K_{\eta}(t):=\ln\mathbb{E}\exp(t\eta)$. 
%For $|t|\le b$,
%by the Taylor theorem, we get
%$$
%K_{\eta}(t)=\frac{1}{2}K_\eta^{\prime\prime}(\theta t)t^2
%$$
%for some $\theta\in(0,1)$. The  number $b=\max_{t\in[-a,a]}K_\eta^{\prime\prime}(t)$ is an example of the value for which the assumption 
%of following estimate on the cumulant generating function of $\eta-\mathbb{E}\eta$:
%$$
%K_{\eta}(t)\le \frac{bt^2}{2}%=\frac{1}{2}(\sqrt{d}t)^2
%$$
%is satisfied for $|t|\le a$. 
%\end{rem}
\begin{rem}
\label{remlem2}
Let us observe that $g(t)\ge \min\{t^2/(2a^2),bt/2\}$ and we may rewrite the claim of the above lemma in a weaker but more traditional, for the Bernstein-type inequality, form  as follows 
$$
\mathbb{P}\big(|\eta|\ge t\big)\le 2\exp\Big(-\min\Big\{\frac{t^2}{2a^2},\frac{bt}{2}\Big\}\Big).
$$
\end{rem}
\begin{rem}
\label{rem2lem2}
Equivalence of properties 4 and 5 in Vershynin 
\cite[Lem. 2.1]{Ver} implies  that  for a centered sub-exponential random variable $\eta$ the assumptions of  Lemma \ref{lem2} are  satisfied with $a=\sqrt{2}C\|\eta\|_{\psi_1}$ and $b=1/(C\|\eta\|_{\psi_1})$, where $C$ is a universal constant. % it is known that centered sub-exponential random variables satisfy the assumption of the above lemma with $a=C\|\eta\|_{\psi_1}$ and $b=1/(c\|\eta\|_{\psi_1})$  for some constants $c$, $C$.
%for some universal constants $c,C>0$. 
So, for such variables, we can rewrite the above estimate in the following form
$$
\mathbb{P}\big(|\eta|\ge t\big)\le 2\exp\Big(-\min\Big\{\frac{t^2}{4C^2\|\eta\|_{\psi_1}^2},\frac{t}{2C\|\eta\|_{\psi_1}}\Big\}\Big).
$$
%Let us emphasize that one of the ways of obtaining inequality for Gaussian random variables is finding their norms
\end{rem}

Let us emphasize that one of the ways to obtain Bernstein-type inequalities for sub-exponential random variables is to find (estimate) their $\psi_1$-norms. Now we proceed to an estimate of norms of chaos in sub-exponential random vectors but first we recall the notion of chaos of order $d$. %and their estomates of ...

Let $A$ be a  multi-indexed array of real numbers $[a_{i_1,\ldots,i_d}]_{i_1,...,i_d=1}^n$ and  
$$
S_d(x)=\sum_{i_1,...,i_d=1}^na_{i_1,\ldots,i_d}x_{i_1}\cdots x_{i_d}
$$
for $x=(x_1,...,x_n)\in\mathbb{R}^n$. For a random vector $\xi=(\xi_i)_{i=1}^n$ a random variable $S_d(\xi)$ is called {\it chaos of order $d$}.

Now we show some estimate of the $\psi_1$-norm of chaos of order $d$ in $d$-sub-exponential random vectors.
Let $\|A\|_p$ denote the $p$-th norm of $A$ for $p\in [1,\infty]$, i.e.
$$
\|A\|_p=\Big(\sum_{i_1,...,i_d=1}^n\big|a_{i_1,\ldots,i_d}\big|^p\Big)^{1/p} \;{\rm and}\quad\|A\|_\infty=\max_{1\le i_1,...,i_d\le n}\big|a_{i_1,\ldots,i_d}\big|.
$$  
%\begin{lem}
%\label{estS}
%Let $x=(x_1,...,x_n)\in\mathbb{R}^n$, $S_d(x)=\sum_{i_1,...,i_d=1}^na_{i_1,\ldots,i_d}x_{i_1}\cdots x_{i_d}$ and $A=[a_{i_1,\ldots,i_d}]_{i_1,...,i_d=1}^n$. Then for $p\ge 1$ %($d$ is a natural number)
%$
%|S_d(x)|\le \|A\|_{\frac{p}{p-1}}|x|_p^d
%$
%\end{lem}
%\begin{proof}
Observe that applying the H\"older inequality to $S_d(x)$ with exponents  $p$ and $p/(p-1)$  we get 
\begin{eqnarray*}
\Big|\sum_{i_1,...,i_d=1}^na_{i_1,\ldots,i_d}x_{i_1}\cdots x_{i_d}\Big|&\le& \Big(\sum_{i_1,...,i_d=1}^n\big|a_{i_1,\ldots,i_d}\big|^{p/(p-1)}\Big)^{(p-1)/p}
\Big(\sum_{i_1,...,i_d=1}^n|x_{i_1}\cdots x_{i_d}|^p\Big)^{1/p}\\
\; &=& \|A\|_{\frac{p}{p-1}}\Big(\sum_{i=1}^n|x_i|^p\Big)^{d/p}=\|A\|_{\frac{p}{p-1}}|x|_p^d
\end{eqnarray*}
%\end{proof}
It means that 
\begin{equation}
\label{estS}
|S_d(x)|\le \|A\|_{\frac{p}{p-1}}|x|_p^d.
\end{equation}
%\begin{rem}
%For $d=2$ we have  the well-know estimate on the operator norm of the matrix $A$ by the  Hilbert-Schmidt norm $\|A\|_2$.
%\end{rem}

Let us notice that for sub-exponential random vectors we have the following
\begin{lem}
\label{estnormSd}
Let $\xi\in  L^n_{\psi_d}$ then $S_d(\xi)\in L_{\psi_1}$ and
$$
\|S_d(\xi)\|_{\psi_1}\le \|A\|_{d^\prime}\|\xi\|_{E_d}^d,
$$
where $d^\prime=d/(d-1)$.
\end{lem}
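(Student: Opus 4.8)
The plan is to reduce the statement to the deterministic inequality (\ref{estS}) together with two elementary facts about the Luxemburg norm: its monotonicity under pointwise domination and its behaviour under the rescaling that links the $\psi_d$-functional of $|\xi|_d$ with the $\psi_1$-functional of $|\xi|_d^d$.

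First I would specialize (\ref{estS}) to $p=d$, so that $p/(p-1)=d/(d-1)=d^\prime$, obtaining the pointwise bound $|S_d(\xi)|\le\|A\|_{d^\prime}\,|\xi|_d^d$ on $\Omega$. Since $\xi\in L^n_{\psi_d}$, the norm equivalences established above (the first Proposition of Section~2) give $\|\xi\|_{E_d}=\bigl\||\xi|_d\bigr\|_{\psi_d}<\infty$, so the right-hand side is a well-defined nonnegative random variable. Next I would use that $\|\cdot\|_{\psi_1}$ is monotone: if $0\le X\le Y$ a.s.\ then $\mathbb{E}\exp(X/K)\le\mathbb{E}\exp(Y/K)$ for every $K>0$, hence $\|X\|_{\psi_1}\le\|Y\|_{\psi_1}$; combining this with positive homogeneity of the norm yields $\|S_d(\xi)\|_{\psi_1}\le\|A\|_{d^\prime}\,\bigl\||\xi|_d^d\bigr\|_{\psi_1}$.

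It then remains to show $\bigl\||\xi|_d^d\bigr\|_{\psi_1}\le\|\xi\|_{E_d}^d$. For any $K>\|\xi\|_{E_d}$ we have, by definition of $\|\cdot\|_{E_d}$, that $\mathbb{E}\exp\bigl(|\xi|_d^d/K^d\bigr)=\mathbb{E}\exp\bigl((|\xi|_d/K)^d\bigr)\le 2$, so reading this with $M:=K^d$ in the infimum defining the $\psi_1$-norm shows $\bigl\||\xi|_d^d\bigr\|_{\psi_1}\le K^d$; letting $K\downarrow\|\xi\|_{E_d}$ and using continuity of $u\mapsto u^d$ gives $\bigl\||\xi|_d^d\bigr\|_{\psi_1}\le\|\xi\|_{E_d}^d$. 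Substituting into the display of the previous paragraph yields $\|S_d(\xi)\|_{\psi_1}\le\|A\|_{d^\prime}\|\xi\|_{E_d}^d$, and in particular $S_d(\xi)\in L_{\psi_1}$.

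The argument is essentially mechanical, and there is no genuine obstacle: the only point needing a line of care is the infimum-versus-attained subtlety in the last step, handled in the standard way by approximating $\|\xi\|_{E_d}$ from above. The whole content of the lemma is that the norm $\|\cdot\|_{E_d}$ was designed precisely so that the exponent $d$ of $|\xi|_d$ matches the exponent in the $\psi_d$-functional and collapses to $\psi_1$ upon raising to the $d$-th power, making (\ref{estS}) directly applicable.
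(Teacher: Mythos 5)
Your proof is correct and follows essentially the same route as the paper: specialize the pointwise bound (\ref{estS}) to $p=d$ and use the definition of $\|\cdot\|_{E_d}$ to convert the $\psi_d$-type bound on $|\xi|_d$ into a $\psi_1$-bound on $|\xi|_d^d$, hence on $S_d(\xi)$. The only difference is cosmetic: the paper plugs $K=\|\xi\|_{E_d}$ directly into the exponential moment, while you approximate the infimum from above, which is a slightly more careful write-up of the same argument.
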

\begin{proof}
By  (\ref{estS}) we get
$$
\mathbb{E}\exp\Big(\frac{|S_d(\xi)|}{\|A\|_{d^\prime}\|\xi\|^d_{E_d}}\Big)\le \mathbb{E}\exp\Big(\frac{|\xi|_d^d}{\|\xi\|^d_{E_d}}\Big)\le 2.
$$
It means that the Luxemburg norm of $S_d(\xi)$ in $L_{\psi_1}$ is less or equal $\|A\|_{d^\prime}\|\xi\|^d_{E_d}$.
\end{proof}

By the definition of the norm $\|\cdot\|_{\psi_1}$ and the Jensen inequality applied to a convex function $\exp\{|\cdot|/a\}$ ($a>0$) we get
\begin{equation*}
\label{estE}
2\ge \mathbb{E}\exp\Big(\frac{|S_d(\xi)|}{\|S_d(\xi)\|_{\psi_1}}\Big)\ge \exp\Big(\frac{|\mathbb{E}S_d(\xi)|}{\|S_d(\xi)\|_{\psi_1}}\Big)=
\mathbb{E}\exp\Big(\frac{|\mathbb{E}S_d(\xi)|}{\|S_d(\xi)\|_{\psi_1}}\Big),
\end{equation*}
which means that $\|\mathbb{E}S_d(\xi)\|_{\psi_1}\le\|S_d(\xi)\|_{\psi_1}$.

To sum up, we get the following estimate for $\psi_1$-norm of centered chaos of order $d$ in $d$-sub-exponential random vectors: 
$$
\|S_d(\xi)-\mathbb{E}S_d(\xi)\|_{\psi_1}\le 2\|S_d(\xi)\|_{\psi_1}\le 2\|A\|_{d^\prime}\|\xi\|_{E_d}^d.
$$

By Remark \ref{rem2lem2} and the above inequality, taking in Lemma \ref{lem2} 
$a=2\sqrt{2}C\|A\|_{d^\prime}\|\xi\|_{E_d}^d$ and $b=1/(2C\|A\|_{d^\prime}\|\xi\|_{E_d}^d)$ we get the main result of this section.
\begin{pro}
Let  $\xi\in L^n_{\psi_d}(\Omega)$ % and $S_d(\xi)=\sum_{i_1,...,i_d=1}^na_{i_1,\ldots,i_d}\xi_{i_1}\cdots \xi_{i_d}$. 
then
$$
\mathbb{P}\Big(\big|S_d(\xi)-\mathbb{E}S_d(\xi)\big|\ge t\Big)\le 2e^{-g(t)},
$$
where 
$$
g(t)=\left\{
\begin{array}{ccl}
\frac{t^2}{16C^2\|A\|_{d^\prime}^2\|\xi\|^{2d}_{E_d}} & {\rm if} & 0 \le t \le 4C\|A\|_{d^\prime}\|\xi\|^d_{E_d}\\
\frac{t}{2C\|A\|_{d^\prime}\|\xi\|^d_{E_d}}-1 & {\rm if} & t > 4C\|A\|_{d^\prime}\|\xi\|^d_{E_d}
\end{array}
\right.
$$
and $d^\prime=d/(d-1)$ and $C$ is the absolute constant as in Remark \ref{rem2lem2}.
\end{pro}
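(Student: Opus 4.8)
The plan is to reduce the claim to the one-dimensional tail bound of Lemma \ref{lem2} applied to the centered variable $\eta:=S_d(\xi)-\mathbb{E}S_d(\xi)$, so that all that must be supplied is a bound on $\|\eta\|_{\psi_1}$ together with the resulting values of the parameters $a,b$. First I would collect the sub-exponential bound already at hand: Lemma \ref{estnormSd} gives $\|S_d(\xi)\|_{\psi_1}\le\|A\|_{d^\prime}\|\xi\|_{E_d}^d$ (in particular $S_d(\xi)\in L_{\psi_1}\subset L_1$, so $\eta$ is well defined and centered), the Jensen estimate displayed just before the proposition gives $\|\mathbb{E}S_d(\xi)\|_{\psi_1}\le\|S_d(\xi)\|_{\psi_1}$, and the triangle inequality then yields $\|\eta\|_{\psi_1}\le 2\|A\|_{d^\prime}\|\xi\|_{E_d}^d=:K$. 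If $K=0$ then $\eta=0$ almost surely and there is nothing to prove, so one may assume $K>0$.

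Next I would turn this into a moment-generating-function bound near the origin. By Remark \ref{rem2lem2}, $\mathbb{E}\exp(t\eta)\le\exp(a_0^2t^2/2)$ for $|t|\le b_0$, where $a_0=\sqrt{2}\,C\|\eta\|_{\psi_1}$ and $b_0=1/(C\|\eta\|_{\psi_1})$. Since $\|\eta\|_{\psi_1}\le K$, enlarging $a_0$ to $a:=\sqrt{2}\,CK=2\sqrt{2}\,C\|A\|_{d^\prime}\|\xi\|_{E_d}^d$ only weakens the right-hand side, while shrinking $b_0$ to $b:=1/(CK)=1/(2C\|A\|_{d^\prime}\|\xi\|_{E_d}^d)$ only narrows the interval on which the inequality is claimed; hence $\mathbb{E}\exp(t\eta)\le\exp(a^2t^2/2)$ for all $|t|\le b$. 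This monotonicity step — upgrading the Gaussian-domination bound of Remark \ref{rem2lem2} to one with the larger constant $a$ and smaller range $b$ coming from the upper bound $K$ rather than from the exact value $\|\eta\|_{\psi_1}$ — is the one point that genuinely needs to be spelled out; everything else is bookkeeping.

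Finally I would invoke Lemma \ref{lem2} with these $a$ and $b$, obtaining $\mathbb{P}(|\eta|\ge t)\le 2e^{-g(t)}$ with $g$ as there, and substitute. The breakpoint is $a^2b=8C^2\|A\|_{d^\prime}^2\|\xi\|_{E_d}^{2d}\cdot\bigl(2C\|A\|_{d^\prime}\|\xi\|_{E_d}^d\bigr)^{-1}=4C\|A\|_{d^\prime}\|\xi\|_{E_d}^d$; for $0\le t\le a^2b$ one has $t^2/(2a^2)=t^2/\bigl(16C^2\|A\|_{d^\prime}^2\|\xi\|_{E_d}^{2d}\bigr)$, and for $t>a^2b$ one has $bt-a^2b^2/2=t/\bigl(2C\|A\|_{d^\prime}\|\xi\|_{E_d}^d\bigr)-1$, using $a^2b^2/2=\tfrac12(a^2b)\,b=\tfrac12\cdot 2=1$. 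These two expressions are precisely the two branches of $g$ in the statement, so the proof is complete.
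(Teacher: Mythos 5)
Your proposal is correct and follows the same route as the paper: the paper likewise combines the bound $\|S_d(\xi)-\mathbb{E}S_d(\xi)\|_{\psi_1}\le 2\|A\|_{d^\prime}\|\xi\|_{E_d}^d$ (from Lemma \ref{estnormSd} and the Jensen estimate) with Remark \ref{rem2lem2} and Lemma \ref{lem2}, taking exactly $a=2\sqrt{2}C\|A\|_{d^\prime}\|\xi\|_{E_d}^d$ and $b=1/(2C\|A\|_{d^\prime}\|\xi\|_{E_d}^d)$. You merely make explicit the monotonicity step (replacing $\|\eta\|_{\psi_1}$ by its upper bound enlarges $a$ and shrinks $b$) and the breakpoint arithmetic, which the paper leaves implicit.
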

\begin{rem}
For $d=2$ we have quadratic forms $S_2(\xi)=\xi^TA\xi$ ($A=[a_{ij}]_{1\le i,j\le n}$) of sub-gaussian random vectors $\xi$ ($\|\xi\|_{E_2}<\infty$). Let $K$ denote the norm $\|\xi\|_{E_2}$. In the case of quadratic forms we can take the operator norm $\|A\|$ instead of the Hilbert-Schmidt norm $\|A\|_2$. Using the weaker form of Bernstein-type estimate (see Rem. \ref{remlem2}) we get 
$$
\mathbb{P}\Big(\big|\xi^TA\xi-\mathbb{E}(\xi^TA\xi)\big|\ge t\Big)\le 
2\exp\Big(-\min\Big\{\frac{t^2}{16C^2\|A\|^2K^4}, \frac{t}{4C\|A\|K^2} \Big\}\Big).
$$
The similar result was obtained in \cite[Th.2.5]{Ada} but under the assumption that $\xi$ has the convex concentration property with a constant $K$ (see therein).
Let us emphasize that in our approach $K$ is the Luxemburg norm $\|\xi\|_{E_2}$.  
\end{rem}

The above result  has the form of the Hanson-Wright inequality for independent random variables which was proved in \cite{HanWri,Wri} and recently derived in \cite{RudVer}. In \cite[Lem.2.5, Lem.2.7]{Zaj2} one can find another estimate of quadratic forms in dependent sub-gaussian random variables with different norms of random vectors 
and another form of bounds on tail probabilities (see \cite[Prop.2.6, Th.2.9]{Zaj2}). Let us emphasize that the introduced norm $\|\cdot\|_{E_2}$  can be used for investigation of chaos of order greater than two. 

%In \cite{HKZ} one can find other definition of sub-gaussian random vectors and other form of tail inequality for their quadratic forms for dependent r.v.s. Let us emphasize that we introduce the notion of sub-exponential of any rank $p$ random vectors (not only $p=2$). Let us stress that the definition (\ref{dfnorm}) of the norm of  sub-exponential of any rank $p$ random vectors can also be  used  in other contexts, for example to  estimate moments of chaoses in dependent settings.

\end{document}